\documentclass[12pt]{amsart}
\oddsidemargin -0.4cm
\evensidemargin -0.4cm
\textwidth 16.7cm
\headsep 0.8 cm

\usepackage{amssymb}
\usepackage[latin1]{inputenc}
\usepackage{amsthm}
\usepackage{amsfonts}
\usepackage{mathrsfs}
\usepackage{graphicx}
\usepackage{amsmath}
\usepackage[all]{xy}

\let\mathcal\mathscr
\def\bA{{\mathbb A}}

\newtheorem{thm}{Theorem}[section]

\def\P{{\bf P}}

\def\R{{\bf R}}

\def\C{{\bf C}}

\def\Pic{\mathop{\rm Pic}\nolimits}

\def\Spec{\mathop{\rm Spec}\nolimits}

\def\tilde{\widetilde}

\def\phi{\varphi}

\def\cM{{\mathcal M}}
\def\cO{{\mathcal O}}

\def\div{\mathop{\rm div}\nolimits}

\numberwithin{equation}{section}

\newtheorem{theorem}[thm]{Theorem}

\newtheorem{conjecture}[thm]{Conjecture}
\newtheorem{corollary}[thm]{Corollary}

\newtheorem{example}[thm]{Example}

\newtheorem{lemma}[thm]{Lemma}

\newtheorem{proposition}[thm]{Proposition}

\newtheorem{remark}[thm]{Remark}

\newtheorem{Claim}[thm]{Claim}
\newtheorem{Fact}[thm]{Fact}

\begin{document}

\title {On some differences between number fields and function fields}
\author{Carlo Gasbarri}\address{Carlo Gasbarri, IRMA, UMR 7501
 7 rue Ren\'e-Descartes
 67084 Strasbourg Cedex}

\date{}
\thanks{Research supported by the FRIAS-USIAS}

\keywords{Arithmetic over function fields, height theory, Lang and Vojta conjectures}
\subjclass[2000]{14G40, 14G22, 11G50.}

\begin{abstract}
The analogy between the arithmetic of varieties over number fields and the arithmetic of varieties over function fields is a leading theme in arithmetic geometry. This analogy is very powerful but there are some gaps. In this note we will show how the presence of isotrivial varieties over function fields (the analogous of which do not seems to exist over number fields) breaks this analogy. Some counterexamples to a statement similar to Northcott Theorem are proposed. In positive characteristic, some explicit counterexamples to statements similar to Lang and Vojta conjectures are given. 
\end{abstract}

\maketitle

\tableofcontents
\section {Introduction}

\

Since the XIX century an analogy between the arithmetic of a number field and the arithmetic of a field of rational functions of an algebraic curve has been observed. For instance both are fields of fractions of suitable Dedekind domains where a so called product formula holds. This kind of fields is nowadays called a "global field". We expect that the arithmetic theory of the algebraic points of algebraic varieties over global fields may have similar features, thus a similar theory. 

More concretely one expects that there should exist a "formal language" with many models. Some of these models are builded up from the varieties over number fields and others are builded up from the varieties over function fields. A statement proved int this language will give then theorems in both theories. 

Ideas f gave many interesting applications: for instance the description of the class field theory using ad\`eles and id\`eles is one of the big achievements   of this. 

The theory of schemes in algebraic geometry also provides a good example of language which can be applied both over function fields and over number fields. Moreover, Arakelov theory push forward this analogy to obtain a good intersection theory which, with some caveat, is formally the same. 

At the moment the language of the analogy is sufficiently developed in order to allow to formulate common conjectures and ideas. Lang and Vojta conjectures are leading ideas in this contest. Over a number field, the Lang conjecture predicts that the rational points of a variety of general type should be not Zariski dense. Over a field of functions in characteristic zero, an analogous conjecture can be stated but one has to exclude varieties which, after a field extension, are birational to varieties  defined over the base field (cf. after). One of the aims of this note is to show that, for function fields in positive characteristic, even a weak form of this is false. 

Usually, when one wants to prove a theorem on the arithmetic of rational (algebraic) points of varieties over global fields, the situation is more favorable in the function fields case. This is principally due to the fact that, over these fields, an horizontal derivation is available (there is a non trivial derivation over the base field). This is why many statements which are conjectural over varieties over number fields are proved in the analogous situation over function fields.  Consequently, it is widely believed that a conjecture in this theory should be checked before over function fields and then, once the proof is well understood there, one should try to attack it for varieties over number fields. We want to show, mainly by examples, that some part of height theory seems to better behave over number fields then over function fields. This, again,  is due to the existence of the so called isotrivial varieties (the analogous of which do not seem to exist over number fields). 

In the last part of this paper we will construct explicit examples of surfaces over a function field of positive characteristic which are of general type, are not birational to isotrivial surfaces and which are dominated by a surface defined over the base field. These surfaces will provide counterexamples to statements similar to Lang and Vojta conjectures. 

The fact that part the analogy is broken by the existence of isotrivial varieties is, in our opinion, a very important issue which should be analyzed more deeply. A better comprehension of it would probably improve aspects of the analogy and will lead to a development of the common language. This will allow to perhaps better formulate the leading conjectures of the theory. 

This note is based on the talk I gave at the "Terzo incontro italiano di teoria dei numeri" held in Pisa in september 2015. I would warmly thank the organizers, in particolar Andrea Bandini and Ilaria Del Corso for the perfect organization of the meeting and for the possibility they gave to me to give a talk. 

\section{Notations, terminology}

In the sequel $K$ will be a global field. Thus $K$ may be either a number field or the field of rational function of a smooth projective curve $B$ over the complex numbers or the field of rational functions of a smooth projective curve over an algebraically closed field $k$ of positive characteristic. When the base field $K$ is a number field we will say that "we are in the number field case, otherwise we will say that we are dealing with the "function fields case". 

In both situations we will denote by $\overline{K}$ the algebraic closure of $K$.

If $L/K$ is a finite extension. In the function field case, there is a unique smooth projective curve $B_L$ with a finite morphism $\alpha: B_L\to B$.  If we denote by $g_L$ the genus of $B_L$,  we will denote by $d_L$  the number ${{2g_L-2}\over{\deg(\alpha)}}$.  In the number field case, by analogy with the above,  we will denote by  $d_L$ the logarithm of the absolute value of the relative discriminant of $L$ over $K$. 

We suppose now that we are in the function field case. In this case we will denote by $k$ the field $\C$ or the aforementioned field $k$. 

Let $X_K$ be a smooth projective $K$--variety. By a model of $X_K$ over $B$ we mean a normal projective $k$-variety $X$ (even smooth when $k$ is $\C$) with a flat projective morphism $p:X\to B$ such that the following diagram is cartesian

$$\xymatrix{X_K\ar[r] \ar[d] & X \ar[d]^p \\
\Spec(K)\ar[r] & B.\\}$$

It is very easy to construct models of $X_K$: a model of it may be realized as a closed set of $\P^N\times B$. Such a model, in general, won't  be regular and not even normal. If we consider the normalization of it (and, in characteristic zero, resolution of singularities of it) one may always construct normal projective models of $X_K$ (and even smooth, in characteristic zero). 

If $H_K$ is a line bundle over $X_K$, by  a model of $H_K$ over $B$ we mean a couple $(X,H)$ where $X$ is a model of $X_K$ over $B$ and $H$ is a line bundle over $X$ whose restriction to $X_K$ is $H_K$. Since every line bundle is difference of very ample line bundles, models of $(X,H)$ always exist.

Suppose that $p\in X_K(L)$ is a $L$-rational point and $X$ is a model of $X_K$ over $B$. By the valuative criterion of properness, there is an unique $k$--morphism $P:B_L\to X$ such that $p\circ P=\alpha$ and the following diagram is cartesian
$$\xymatrix{\Spec(L)\ar[r] \ar[d] ^p& B_L \ar[d]^P \\
X_K\ar[r] & X.\\}$$

We will say that $P$ is the model of the point $p$ over $X$.

Suppose that $X_K$ is a variety. We will say that $X_F$ is {\it isotrivial} if we can find a variety $X_0$ {\it defined over $k$} and an isomorphism $X_K\times_K\Spec(\overline{K})\simeq X_0\times_kSpec(\overline{K})$.

For instance, the projective space $\P_N$ is isotrivial (and the isomorphism may be defined over $K$). If $K=k(t)$ and $X_K$ is the curve $\{y^2z=x^3+tz^3\}\subset\P^2$; then $X_K$ is isotrivial but it is not defined over $k$: It will be isomorphic to $y^2z=x^3+z^3$ over the field $k(t^{1/6})$.

Suppose that $X_K$ is a smooth variety, let $f:X\to B$ be a model of it. If we restrict $f$ to an open set $U$ of $B$, we may suppose that the morphism $f$ is smooth, The restriction to the generic fibre of the canonical exact sequence of differentials associated to $f$ give rise to an extension
\begin{equation}\label{ks}
0\longrightarrow \cO_X\longrightarrow E\longrightarrow \Omega^1_{X_K/K}\longrightarrow 0
\end{equation}
which gives a class $KS(X_K)\in H^1(X_K,(\Omega^1_{X_K/K})^\vee)$, called the {\it Kodaira Spencer class of $X_K$}. It is independent on the model $X$. The following  important fact holds:

\begin{Fact}\label{isotrivial2} Let $X_K$ be a smooth variety over a function field (of any characteristic). If the Kodaira Spencer class of $X_K$ is non zero, then $X_K$ is not isotrivial. 
\end{Fact}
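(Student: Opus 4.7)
I would prove the contrapositive: if $X_K$ is isotrivial, then its Kodaira-Spencer class vanishes. The strategy is to pass to a finite extension where isotriviality becomes an actual isomorphism with a constant family, verify the splitting there, and descend.

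First, choose a finite extension $L/K$ over which the isotriviality isomorphism is defined, so that $X_L := X_K\times_K\Spec(L)\simeq X_0\times_k\Spec(L)$ for some $X_0$ defined over $k$. On the function-field side $L$ corresponds to a smooth projective curve $B_L$ with a finite map $\alpha:B_L\to B$, and the constant family
$$f_0:X_0\times_k B_L\longrightarrow B_L$$
is a smooth projective model of $X_L$ over $B_L$. Since the Kodaira-Spencer class of $X_L$ is independent of the model, I would compute it using $f_0$.

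For this constant model, the sheaf of differentials decomposes as a direct sum,
$$\Omega^1_{X_0\times B_L}\;\simeq\;\mathrm{pr}_1^*\Omega^1_{X_0}\oplus\mathrm{pr}_2^*\Omega^1_{B_L},$$
because $X_0\times B_L$ is literally a product. Restricting the canonical relative differential sequence of $f_0$ to the generic fibre $X_L$ therefore yields a split extension, so the extension class (\ref{ks}) is zero in $H^1(X_L,(\Omega^1_{X_L/L})^\vee)$. Hence $KS(X_L)=0$.

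Next I would relate $KS(X_L)$ to $KS(X_K)$. Taking the fibre product $X\times_B B_L$ of the original model with $B_L$ gives a model of $X_L$ over $B_L$, and the relative differential sequence is compatible with flat base change. Therefore $KS(X_L)$ is the image of $KS(X_K)$ under the natural map
$$H^1\bigl(X_K,(\Omega^1_{X_K/K})^\vee\bigr)\otimes_K L\;\longrightarrow\; H^1\bigl(X_L,(\Omega^1_{X_L/L})^\vee\bigr),$$
which is an isomorphism by flat base change for cohomology of coherent sheaves. Since $K\to L$ is faithfully flat (any field extension is), the vanishing of $KS(X_K)\otimes 1$ forces $KS(X_K)=0$, as desired.

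The only place requiring real care is the compatibility in the last paragraph: one has to check that the Kodaira-Spencer class genuinely is functorial under the base change $B_L\to B$, i.e.\ that the extension defining $KS(X_L)$ via the constant model $X_0\times B_L$ agrees (as an element of $H^1$) with the one obtained by pulling back the extension from $X$. This is essentially the statement that the KS class is well-defined independently of the smooth model chosen, applied after base change to $L$; I would invoke (or, if needed, briefly re-derive) this model-independence to close the loop.
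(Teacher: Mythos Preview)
Your proof is correct and follows essentially the same route as the paper's sketch: pass to a finite extension where the isomorphism with a constant variety is defined, observe that the product model splits the extension (\ref{ks}), and then use the base-change isomorphism $H^1(X_K,(\Omega^1_{X_K/K})^\vee)\otimes_K L\simeq H^1(X_L,(\Omega^1_{X_L/L})^\vee)$ carrying $KS(X_K)\otimes 1$ to $KS(X_L)$ to descend the vanishing. Your write-up is simply more explicit about the product decomposition of differentials and the faithful flatness argument where the paper says ``one easily sees/checks''.
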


Let's sketch why Fact \ref{isotrivial2} holds: suppose that there exists a smooth projective variety $X_0$ defined over $k$ such that $X_0\times_kK\simeq X_K$ (the isomorphism is defined over $K$), then one easily sees that $X_0\times B$ is a model of $X_K$ and  the exact sequence \ref{ks} is split. 
If $K'/K$ is a finite extension, denote by $X'$ the $K'$ variety $X_K\times_KK'$.  One easily checks that  one has an isomorphism 
$H^1(X_K,(\Omega^1_{X_K/K})^\vee)\otimes K'\simeq H^1(X_{K'}(\Omega^1_{X_{K'}/K'})^\vee)$ and  the image of $KS(X_K)\otimes 1$ via this isomorphism  is $KS(X_{K'})$.  Thus, if  there exists a finite extension $K'/K$ and an isomorphism $X_0\times_kK'\simeq X_0\times_KK'$ then $KS(X_K)\otimes 1=0$ and consequently $KS(X_K)=0$. 

\smallskip

One of the leading conjecture on arithmetic of varieties over global fields is the Lang conjecture: We recall that if $X$ is a smooth projective variety defined over a field and $K_X$ is the canonical bundle of it, then $X_K$ is said to be {\it of general type} if $h^0(X_K, K_{X}^n)\sim_n n^{\dim(X)}$.

\begin{conjecture} (Lang) Let $K$ be a global field of characteristic zero and $X_K$ be a smooth projective variety of general type defined over $K$. If $K$ is a function fiel, then we also suppose that $X_K$ is not birational to an isotrivial variety. Then $X(K)$ is not Zariski dense.
\end{conjecture}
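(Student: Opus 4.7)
The plan is to follow the Vojta-Faltings philosophy, which predicts a height inequality strong enough to force non-density of rational points. I should say at the outset that this is the full Lang conjecture, open in dimension $\ge 2$, so what follows is a roadmap through the known special cases together with an indication of the general strategy and of where it breaks down.

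First I would dispatch the known cases. If $\dim X_K = 1$, so $X_K$ is a smooth projective curve of genus $\ge 2$, the statement is Faltings' theorem in the number field case and the Manin-Grauert theorem in the function field case. In the latter, the isotriviality hypothesis is precisely what is needed: the constant family $X_0 \times B$ has sections parametrized by $X_0(k)$ and these are automatically Zariski dense, so this class of varieties must be excluded. If $X_K$ embeds into an abelian variety $A_K$, then the general-type assumption translates into the statement that no translate of a positive-dimensional abelian subvariety of $A_K$ is contained in $X_K$, and the conclusion follows from Faltings' theorem on the Mordell-Lang conjecture.

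For the general case my strategy would be a Vojta-style construction on a high product $X_K^n$. Using the general-type hypothesis, Riemann-Roch and the asymptotic $h^0(X_K, K_{X_K}^{\otimes m}) \sim m^{\dim X_K}$, one builds an auxiliary big divisor; one then arranges a section vanishing to large order along a suitable generalized diagonal; and from a hypothetical Zariski-dense sequence of rational points one extracts a contradiction by means of a Dyson-type or Roth-type lemma applied to carefully chosen $n$-tuples. In the function field case one should replace the archimedean diophantine approximation step by its derivation-based analogue: the Kodaira-Spencer class from \eqref{ks} provides, under non-isotriviality (Fact \ref{isotrivial2}), the genuine horizontal derivation that drives the estimate and that is morally why the function field version is expected to be easier.

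The main obstacle, and the reason the conjecture remains open, is that there is no known general construction producing the required auxiliary divisor with controlled vanishing on $X_K^n$ outside of the abelian-variety regime. Even for smooth surfaces of general type satisfying numerical Miyaoka-Yau-type inequalities, no unconditional proof is known; the available methods either exploit a group structure (Faltings) or require very restrictive hyperbolicity hypotheses (Bombieri, Nevanlinna-type arguments, ample cotangent bundle). In characteristic zero the function field formulation is more tractable thanks to the available horizontal derivation, which is why many more classes have been settled there than over number fields, but a fully general proof is not in reach with current techniques. The isotriviality exclusion is indispensable, as the trivial model $X_0\times_k B$ shows immediately.
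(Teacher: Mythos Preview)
The statement you are attempting to prove is labeled \emph{Conjecture} in the paper, not \emph{Theorem}; the paper offers no proof of it and treats it throughout as open. There is therefore nothing to compare your proposal against: the paper's only contribution concerning this conjecture is to show, in the final section, that the characteristic-zero hypothesis cannot be dropped, by producing explicit non-isotrivial surfaces of general type over function fields of positive characteristic with Zariski-dense rational points.

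To your credit, you recognize this yourself: you say explicitly that the full Lang conjecture is open in dimension $\ge 2$ and that what you offer is a roadmap, not a proof. Your summary of the settled cases (curves via Faltings and Manin--Grauert, subvarieties of abelian varieties via Faltings' Mordell--Lang) is accurate, and your informal description of the Vojta product strategy and the role of the Kodaira--Spencer class in the function-field setting is reasonable as heuristic. But none of this constitutes a proof, and you should not present it as one. If the assignment was to supply a proof of the displayed statement, the honest answer is that no proof is known; if the assignment was to discuss the statement, then what you have written is a serviceable survey of the landscape, though it would be cleaner to drop the framing ``The plan is to follow\ldots'' and simply state up front that the conjecture is open and then survey what is known.
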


In the last section of this paper we will show that the hypothesis on the characteristic of the field is necessary.

\section{Height theories and remarks on Northcott theorem}

\

Suppose that $K$ is a global field as before. If $X_K$ is a projective variety, we we denote by $FUB(X_K)$ the group of functions $f:X_K(\overline{K})\to \R$ up to bounded functions. 

The main properties of height theory for varieties over number fields may be  resumed by the following statements:

Suppose that $K$ is a number field. There is a unique map of groups
$$\xymatrix{h:Pic(X_K)&\ar[r]&FUB(X_K)\\
L&\ar[r] &h_L(\cdot)\\}$$
(we will say that $h_L(\cdot)$ is the height associated to $L$). 
such that:

\smallskip

(i) It is functorial in $X_F$: if $\varphi: X_F\to Y_F$ is a morphism of varieties, then, for every $L\in Pic(Y_K)$ and every $p\in X_K(\overline{K})$ we have $h_L(\varphi(p))=h_{\varphi^\ast(L)}(p)$.

\smallskip

(ii)  If $X_F$ is the projective space $\P_N$ and $L=\cO(1)$ then the standard Weil height is in the class of $h_L(\cdot)$ .

Moreover the following properties are verified:

\smallskip

a) If $D$ is an effective divisor on $X_K$ and $L=\cO_{X_K}(D)$, then $h_L\geq O(1)$ on $(X_K\setminus D)(\overline{K})$.

\smallskip

b) (Northcott Theorem) Let $L_K$ be an ample line bundle over $X_K$ and let $h_L(\cdot)$ be a function representing the height with respect to $L_K$. Suppose that $A$ and $B$ are positive constants. Then the set 
$$\left\{ p\in X_K(\overline{K})\,\, {\rm s.t.} [K(p):K]\leq B \;\;{\rm and}\;h_L(p)\leq A\right\}$$
is finite.

When $K$ is a function field, a theory formally similar to height theory is available:

Suppose now that $K$ is a function field. There is a unique map of groups $h_L:Pic(X_K)\to FUB(X_K)$ which verify property (i) above and which verify the following

(ii') If $X_F$ is the projective space $\P_N$ and $L=\cO(1)$  then class $h_L$ is computed as follows:
Suppose that $p\in X_K(L)$ and $P:B_L\to\P^N$ is the associated morphism; then 
$$h_L(p)={{\deg(P^\ast(L))}\over{[L:K]}}.$$

It is easy to verify that a property similar to property (a) above holds in this case. Moreover the proof of this is formally the same in the function field and in the number field case. 

On the opposed side, property (b) above fails in general. 

We will now describe some examples which show the failure of Northcott property of heights over function fields. 

\begin{example} Suppose that $X_K=\P_N$ and $L=\cO(1)$. Then, every point $p\in X_K(k)$ give rise to a point $p\in X_K(K)$ and it is easy to see that all these points have bounded height (he bound will depend on the model of $X_K$ we choose). Moreover these points are Zariski dense. 
\end{example}

Of course one may object that the example above is isotrivial. But it is not easy to change it in a non isotrivial example:

\begin{example} Fix $r>N+4$ non trivial morphisms $f_i:B\to\P_N$. We suppose that the morphisms $f_i$ are not conjugate under the action of $PGL(N+1)$. Each one of the $f_i$'s defines a point $p_i\in P_N(K)$. None of this point is a point of $\P_N(k)$. Let $X_K$ be the blow up of $\P_N$ in these points. Then:

1)  $X_K$ is not isotrivial;

2) The set of $K$--rational points of bounded height (with respect to an ample line bundle) is Zariski dense.
\end{example}

Let's explain why (1) and (2) of example above hold.

1) Let  $X_K$ be a a $K$--variety and i $X\to B$ a model of of it. For every closed point $b\in B(k)$  we denote by $X_b$ the fiber of $X$ over $b$; it is a projective $k$-variety. The variety $X_K$ is isotrivial if and only if there is a non empty open set $U\subseteq B$ such that, for every $b\in U(k)$, the variety $X_b$ is $k$--isomorphic to a fixed $k$-variety $X_0$. We observe the following fact: if $X_1$ is the variety obtained by blowing up $\P_N$ in $N+4$ points in general position  and $X_2$ is  the variety obtained by blowing up $\P_N$ in another $N+4$--uple of  points  in general position (which is not  in the $PGL(N+1)$--orbit of the previous one), then $X_1$ and $X_2$ are {\it not} isomorphic (one easily sees that they can be isomorphic if and only if the blown up points are in the same orbit under $PGL(N+1)$).

Consequently the $f_i$'s are not conjugate under $PGL(N+1)$ and $b$ and $b'$ are two general points of $B$,  then the sets $\{ f_i(b)\}$ and $\{f_i(b')\}$ are not conjugate under the same action. Thus the corresponding $X_b$ and $X_{b'}$ are not isomorphic. Thus $X_K$ is not isotrivial.

\smallskip

2) Each point $q\in\P_N(k)$ rises to a point  $q_1$ of $X_K$. Denote by $\pi:X_K\to\P_N$ the projection and by $L$ the line bundle $\pi^\ast(\cO(1))$. By functoriality, we have that for each point $q_1$ as above, we have that $h_L(q_1)$ is bounded independently of $q_1$. Moreover $L$ is a big bundle, thus, we can find an effective divisor $D$ and an ample divisor $A$ on $X_K$ such that, for $n$ sufficiently big we have $nL=A+D$. By property (a) of heights, the height  with respect to $A$ of the points $q_1$ as above which are not in $D$ is bounded from above independently on $q_1$.

\smallskip

The main criticism we can do to the example above is that the variety $X_K$ is {\it birational} to an isotrivial variety. If we focus our attention, not on rational points, but on points of bounded degree, even this objection can be dramatically abandoned. 

\smallskip

\begin{example} Let $X_K$ be {\rm any} curve defined over $K$ (isotrivial or not). Let $f:X_K\to\P_1$ be a morphism defined over $K$. Let $L_K$ be the line bundle $f^\ast(\cO(1)$ (it is an ample line bundle over $X_K$). Let $d=\deg(f)$. Fix a representative of $h_L(\cdot)$. Then we can find a constant $A$ such that the set 
$$\{p\in X_L(\overline{K}) \;/\; [K(p):K]\leq d \;\; {\rm and}\;\; h_L(p)\leq A\}$$
is infinite (thus Zariski dense). 
\end{example}

Indeed, if we take a point $p\in X_K(\overline{K})$ such that $f(p)\in \P_1(k)$ then, by functoriality of the heights, we have that $h_L(p)\leq A$ for a suitable constant $A$ independent on $p$.
Such a $p$ is defined over an extension of $K$ which is of degree less or equal then $d$ (because, in particular $f(p)\in\P_1(K)$). 

We remark that the example above may even be strengthened in chacteristic zero (or when $d$ is coprime to the characteristic of $k$) : a refinement of the argument above gives that there is a constant $B>d_K$ such that 
$$\{p\in X_L(\overline{K}) \;/\; d_{K(p)}\leq B \;\; {\rm and}\;\; h_L(p)\leq A\}$$
is Zariski dense.

Indeed,  extend the morphism $f$ to a morphism $F$ from a model $X$ of $X_K$ to $\P_1\times B$. 

Let $R$ be the branch divisor of $F$.  Let $b$ be the degree of $R$ over $\P_1$. If $p\in X_K(\overline{K})$ is a point such that $f(p)\in\P_1(k)$ then the curve $B_{K(p)}$ is a covering of $B$ of degree at most $d$ and ramified in at most over $b$ points. Thus, by Hurwitz formula, the genus of 
$B_{K(p)}$ is bounded independently of $p$. 

\smallskip

At the moment the best result we know in the direction of an analogous of Northcott theorem in the function fields case is the following theorem due to Moriwaki \cite{Mo1}:

\begin{theorem} \label{moriwaki} Let $X_K$ be a projective variety which is, either of general type or it do not contain any rational curve. Let $L_K$ be an ample line bundle over $X_K$ and $h_{L_K}(\cdot)$ be a representative of the height with respect of it. Let $A$ be a constant.  Suppose that the set 
$$\{ p\in X_K(K)\;\; h_{L_K}(p)\leq A\}$$
is Zariski dense on $X_K$.

Then $X_K$ is birational to an isotrivial variety. 
\end{theorem}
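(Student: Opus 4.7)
The plan is to convert the height hypothesis into the geometry of sections of a model $p:X\to B$ of $X_K$, and then to use the rigidity provided by the hypothesis on $X_K$ to deduce birational isotriviality. Fix such a model together with a line bundle $L$ on $X$ extending $L_K$; after twisting by a sufficiently positive $p^*H$ one may assume $L$ is ample on $X$. By the valuative criterion of properness, each $q\in X_K(K)$ extends uniquely to a section $\sigma_q:B\to X$ of $p$, and the height is essentially given by $h_{L_K}(q)=\deg_B\sigma_q^*L+O(1)$. The hypothesis therefore produces a Zariski-dense set $\Sigma$ of sections of $p$ whose $L$-degree is uniformly bounded.

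Boundedness is the first key input. Since $L$ is ample on $X$, sections of $p$ of bounded $\sigma^*L$-degree form a bounded family: they are parametrized by the $k$-points of a finite-type quasi-projective $k$-scheme $M\subseteq\Mor_B(B,X)$ cut out inside the Hilbert scheme of $X$. Zariski density of $\bigcup_{\sigma\in\Sigma}\sigma(B)$ in $X$ then forces at least one irreducible component $M_0\subseteq M$ to have positive dimension and the evaluation morphism $\ev:M_0\times B\to X$ to be dominant; in particular $\dim M_0\ge\dim X_K$.

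The core of the proof is extracting a birational trivialization from this dominant family. The dominant morphism $M_0\times B\to X$, together with the projections onto $B$ and onto $M_0$, allows one to compare fibers: for general sections $\sigma_1,\sigma_2\in M_0$ and for general $(b_1,b_2)\in B\times B$ the fibers $X_{b_1}$ and $X_{b_2}$ acquire a well-defined fiberwise rational comparison. The hypothesis on $X_K$ rigidifies these comparisons. If $X_K$ contains no rational curve, non-constant deformations of a section cannot sweep out a uniruled locus, so displacement of sections inside $M_0$ must exhibit the generic fibers of $p$ as mutually birational rather than produce new rational components. If $X_K$ is of general type, one invokes the finiteness of birational self-maps and of birational correspondences between varieties of general type (a higher-dimensional descendant of the classical de~Franchis theorem), which forces the countable image of $M_0$ inside the relevant Hom scheme to be discrete; combined with irreducibility of $M_0$, the birational class of $X_b$ is again constant in $b$.

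In either case one concludes that for $b,b'$ in a non-empty open subset of $B$ the fibers $X_b$ and $X_{b'}$ are birational over $k$; standard descent then yields a $k$-variety $X_0$ together with a birational map $X_K\times_K\overline{K}\dra X_0\times_k\overline{K}$, i.e.\ $X_K$ is birational to an isotrivial variety. The principal obstacle is this last rigidity step: translating a bounded positive-dimensional family of sections with dominant evaluation into an actual birational isotrivialization is not formal, and in particular the general-type case depends on a nontrivial finiteness statement for birational maps that itself constitutes the technical heart of the argument.
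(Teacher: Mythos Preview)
The paper does not prove this theorem. It is quoted as a result of Moriwaki with reference \cite{Mo1} and used only as background for the subsequent discussion; no argument, not even a sketch, is given. There is therefore no ``paper's own proof'' against which to compare your proposal.

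On the substance of your sketch: the first half (bounded height $\Rightarrow$ bounded family of sections in a finite-type piece of the Hom scheme $\Rightarrow$ an irreducible component $M_0$ with dominant evaluation $\ev:M_0\times B\to X$) is correct and standard. The second half, as you yourself flag, is where the real content lies, and what you wrote is too loose to constitute a proof. In the ``no rational curves'' case, the sentence ``non-constant deformations of a section cannot sweep out a uniruled locus'' is a slogan, not an argument: one must actually show that positive-dimensional fibers of $\ev$ force rational curves in the generic fiber, and this typically goes through a rigidity-lemma or bend-and-break style step that you have not supplied. In the general-type case, invoking a de~Franchis-type finiteness is the right instinct, but you have not explained how a family $M_0$ of \emph{sections of $X\to B$} manufactures the birational maps \emph{between fibers} $X_b\dra X_{b'}$ to which such finiteness would apply; that passage is exactly the heart of the matter. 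Note also that the cited reference \cite{Mo1} treats varieties with ample cotangent bundle, a hypothesis strictly stronger than either alternative in the stated theorem, so even the literature pointer does not directly cover the statement as written here.
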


Of course this theorem very well apply to curves, abelian varieties, geometrically hyperbolic varieties etc. but in our opinion it should be generalized and we should find the most general statement. For instance a statement which is true for varieties of arbitrary Kodaira dimension. 

A refinement of the Lang conjecture above is the more ambitious Vojta conjecture: 

\begin{conjecture} (Vojta) Suppose that $K$ is a global field of characteristic zero, $X_K$ is a smooth projective variety defined over it and $K_X$ be the canonical line bundle of $X_K$. Then we can find a proper closed subset $Z\varsubsetneq X_K$ and a positive constant $A$ such that, for every $p\in X_K(\overline{K})\setminus Z$ we have
\begin{equation}
h_{K_X}(p)\leq A\cdot d_{K(p)}+O(1)
\end{equation}
\end{conjecture}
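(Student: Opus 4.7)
The statement is Vojta's conjecture, which is open in the generality given; what follows is a strategy covering the main known cases and indicating where the plan breaks down. The master tool is the Vojta method: via an arithmetic Riemann--Roch / Siegel-type lemma on a self-product $X_K^n$, construct an auxiliary global section of a carefully chosen twisted line bundle with small height; bound its index at an $n$-tuple of rational points of small height; and combine with a Mumford-type gap principle to squeeze all but finitely many $\overline{K}$-points into a proper closed subset $Z\varsubsetneq X_K$. The discriminant term $d_{K(p)}$ enters through an arithmetic Hurwitz/Riemann--Roch comparison between $h_{K_X}(p)$ and the discriminant of the field of definition of $p$, exactly the place where the number field / function field analogy of Section 2 is absorbed.

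First I would treat the case $\dim X_K=1$, $g(X_K)\ge 2$, where the conjecture is in fact a theorem (Faltings--Vojta). The inputs are: (i) the Jacobian embedding $X_K\hookrightarrow J(X_K)$ and the N\'eron--Tate quadratic height; (ii) Mumford's gap principle, which spreads points with respect to the N\'eron--Tate norm; (iii) Vojta's inequality on $X_K\times X_K$, proved via the Faltings--Hriljac arithmetic Hodge index theorem on an arithmetic surface. These produce an intermediate bound $h_{K_X}(p)\le \e\cdot h_{K_X}(p)+A(\e)\cdot d_{K(p)}+O(1)$; taking $\e=1/2$ and rearranging yields the stated inequality with constant $2A(1/2)$. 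Next, by the Faltings product-Vojta construction one extends to arbitrary subvarieties of abelian varieties of general type, covering all $X_K$ that admit a generically finite Albanese morphism onto their image (for abelian varieties themselves, $K_X$ is trivial and the statement is vacuous).

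For general $X_K$ of general type whose Albanese image is of strictly smaller dimension, the plan becomes heuristic. I would try to induct on $\dim X_K$ along the Iitaka fibration, invoking the canonical bundle formula to bring in contributions from base and general fiber, and appeal to an arithmetic Bogomolov--Green--Griffiths-type statement for the symmetric differentials $\Sym^m \Omega^1_{X_K}$ in order to substitute for the missing product inequality. One does not presently know how to turn bigness of $\Sym^m \Omega^1_{X_K}$ into arithmetic height bounds on algebraic points, even though the geometric counterpart (Demailly, Siu, \emph{etc.}) is reasonably developed.

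The main obstacle is precisely this last point: replacing the product-Vojta inequality by a direct argument on $X_K$ itself when no suitable map to an abelian variety exists. A realistic attack would require an Arakelov-theoretic version of the Demailly jet-differential construction, translating positivity of $\Sym^m\Omega^1_{X_K}$ into height inequalities for $\overline{K}$-points. This is where the plan terminates. The theorem of Moriwaki cited above exemplifies the kind of partial result currently accessible in the function field direction, where the horizontal derivation acts as a substitute for the missing arithmetic positivity; its extension from ``no rational curves / general type'' to intermediate or zero Kodaira dimension is the natural next target before attempting the full Vojta inequality.
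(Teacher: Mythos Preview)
The statement is explicitly a \emph{conjecture} in the paper, and the paper offers no proof of it; it only records afterwards that the inequality is known for curves (Yamanoi and McQuillan in the function field setting), for varieties with ample cotangent bundle (Moriwaki), and for a large class of surfaces (McQuillan). You correctly identify the statement as open and present an honest strategic sketch rather than a proof, so there is nothing in the paper for your proposal to be compared against at the level of an actual argument.

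One remark on emphasis, since you do survey the known cases: your discussion is number-field oriented (Faltings--Vojta for curves, Faltings' product theorem for subvarieties of abelian varieties, arithmetic Riemann--Roch and the Hodge index theorem), whereas the paper's surrounding references are function-field oriented (Yamanoi--McQuillan for curves, Moriwaki for ample cotangent, McQuillan for surfaces). Both lists are legitimate since the conjecture is stated over all global fields of characteristic zero, but the mechanisms are genuinely different: the function-field proofs exploit the horizontal derivation and Nevanlinna-type / tautological-inequality arguments rather than the product-Vojta machinery you outline. Neither route currently handles the general case, so your closing assessment---that the plan terminates for want of an arithmetic analogue of the Demailly jet-differential construction---is an accurate description of where things stand.
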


Remark that Vojta conjecture above implies Lang conjecture {\it only in the number fields case}. In the Function field case it implies some kind of arithmetic statement only if we can couple it with theorem \ref{moriwaki}. It is known for curves, cf. for instance \cite{Ga1} where a stronger version of it holds. This version have been proved by Yamanoi and McQuillan (independently). Vojta conjecture holds also  for varieties with ample cotangent bundle \cite{Mo1} and for  a  big class of surfaces \cite{Mq}. In positive characteristic, it is false, we show some counterexamples in the next section. Nevertheless one can see \cite{Ki} for the case of curves in positive characteristic.

\

\section{explicit counterexamples in positive characteristic}

\

In this section we show that, if $K$ is a function field in positive characteristic, we can always explicit examples of varieties of general type which are non isotrivial and having the set of $K$--rational points which is Zariski dense. We will also show that in some explicit examples, the set of rational points with bounded height is Zariski dense. Thus the Lang conjecture is false in this case and its statement should be corrected. 

Let $K$ be a field of positive characteristic $p>2$ (algebraically closed in the first part of this section) and let $X$ be a smooth projective variety defined over it. Let $L$ be an ample line bundle over $X$. We fix a Zariski covering $\{ U_i=\Spec(A_i)\}_{i\in I}$  by affine open sets of $X$ and a cocycle $\{ g_{ij}\}$ submitted to it and defining $L$. 

Let $s\in H^0(X;L^p)$ be a non zero section. We may suppose that it  is locally defined by functions $f_i\in A_{i}$ submitted to the conditions $f_i=g^p_{ij}f_j$ on $U_i\cap U_j$.

We associate to $s$ an inseparable covering of $X$ as follows: We consider the schemes $\Spec(A_i[z_i]/(z_i^p-f_i))$ glued together over $U_i\cap U_j$ by $z_i=g_{ij}z_j$. This give rise to a scheme $Z_s$ with a finite, totally inseparable morphism $f_s:Z_s\to X$. we will call $Z_s$ {\it the  inseparable ramified  $p$--covering associated to $s$}.

Remark that the morphism $f_s$ is actually ramified everywhere, but the name is chosen in analogy with the prime to $p$ case. 

The section $s$ defines a global differential $d(s)\in H^0(X;\Omega^1_{X/F}\otimes L^p)$ as follows:

Locally, over $U_i$ we define $d(s)|_{U_i}:=d(f_i)$. Since $f_i=g_{ij}^pf_j$ we have that $d(f_i)=g_{ij}^pd(f_j)$ over $U_i\cap U_j$. Thus the $d(f_i)$ glue to a global form $d(s)\in H^0(X;\Omega^1_{X/F}\otimes L^p)$.

\subsection{Regularity of $Z_s$}

Let $z\in Z_s$ be a closed point and $x=f_s(z)$. Choose, over an algebraic closure $\overline{K}$ of $K$, an isomorphism between the completion $\widehat{\cO_{X,x}}$ of the local ring of $X$ at $x$ with the ring $\overline{K}[\![ x_1,\dots, x_n]\!]$.  The restriction of $d(s)$ to $\widehat{\cO_{X,x}}$ may be written as $h_1d(x_1)+\dots +h_nd(x_n)$.

\begin{Claim} The point $z$ is singular if and only if the ideal $(h_1,\dots, h_n)$ is contained in the maximal ideal of $\widehat{\cO_{X,x}}$.
\end{Claim}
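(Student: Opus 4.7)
The plan is to verify the claim by a direct computation of the Zariski tangent space at $z$ inside the completed local ring $\widehat{\cO_{Z_s,z}}$. First I would trivialize $L$ on a formal neighborhood of $x$, so that the section $s$ becomes an honest element $f\in\widehat{\cO_{X,x}}\simeq\overline{K}[\![x_1,\dots,x_n]\!]$, and then identify $\widehat{\cO_{Z_s,z}}$ with $\overline{K}[\![x_1,\dots,x_n]\!][z]/(z^p-f)$. Since $\overline{K}$ is algebraically closed of characteristic $p$, the constant $f(0)$ admits a unique $p$-th root $c$, and the substitution $z\mapsto z-c$ (using $(z-c)^p=z^p-c^p$ in characteristic $p$) reduces us to the case $f(0)=0$, i.e.\ $f\in(x_1,\dots,x_n)$, so that the unique maximal ideal of $\widehat{\cO_{Z_s,z}}$ is $\mathfrak{m}_z=(x_1,\dots,x_n,z)$. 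This reduction does not change the coefficients $h_i=\partial f/\partial x_i$.

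Second, I would compute $\mathfrak{m}_z/\mathfrak{m}_z^2$ from the presentation above. Since $\widehat{\cO_{Z_s,z}}$ is a quotient of the regular ring $\overline{K}[\![x_1,\dots,x_n,z]\!]$ by the principal ideal $(z^p-f)$, its cotangent space is $\overline{K}^{\,n+1}$ modulo the linear part of $z^p-f$. The crucial observation is that, as $p\ge 2$, the monomial $z^p$ already lies in $\mathfrak{m}^2$; hence the linear part of $z^p-f$ equals the negative of the linear part of $f$, namely $-\sum_{i=1}^n h_i(0)\,x_i$, the $h_i(0)$ being exactly the values at $x$ of the functions appearing in $d(s)|_{\widehat{\cO_{X,x}}}=\sum h_i\, d(x_i)$.

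Finally, I conclude by the Zariski regularity criterion. If some $h_i(0)\ne 0$, the linear relation is non-trivial, so $\dim_{\overline{K}}\mathfrak{m}_z/\mathfrak{m}_z^2=n$; since $\dim Z_s=\dim X=n$, the point $z$ is regular. Conversely, if $(h_1,\dots,h_n)$ is contained in the maximal ideal of $\widehat{\cO_{X,x}}$, every $h_i(0)$ vanishes, the linear part of $z^p-f$ is zero, $\dim_{\overline{K}}\mathfrak{m}_z/\mathfrak{m}_z^2=n+1>n$, and $z$ is singular. There is no serious obstacle; the only point of substance is the characteristic-$p$ phenomenon that makes the $z^p$ term invisible at the linearized level, which is precisely the structural reason why purely inseparable covers are ramified everywhere and why the singular locus is detected by the differential $d(s)$ rather than by $s$ itself.
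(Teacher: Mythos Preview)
Your proof is correct and follows essentially the same approach as the paper: both pass to the completion, present $\widehat{\cO_{Z_s,z}}$ as $\overline{K}[\![x_1,\dots,x_n]\!][z]/(z^p-f)$, and apply the Jacobian/Zariski criterion, using that $\partial_z(z^p-f)=0$ in characteristic $p$ so that regularity is governed entirely by the values $h_i(0)=\partial_{x_i}f(0)$. You are a bit more explicit than the paper in normalizing so that $f(0)=0$ and in spelling out the cotangent-space computation, but the content is the same.
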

\begin{proof} The regularity of $Z_s$ may be checked on the completions. Choose $i$ such that $x\in U_i$. Then the restriction of $Z_s$ to $Spf(\widehat{\cO_{X,x}})$ is the formal scheme $Spf(\widehat{\cO_{X,x}}[z]/(z^p-f_i)$. It is non regular if and only if ${{\partial}\over{\partial z}}(z^p-f_i)$ and ${{\partial}\over{\partial x_j}}(z^p-f_i)$ belong to the maximal ideal of $\widehat{\cO_{X,x}}[\![z]\!]$  for all $j$. Since ${{\partial}\over{\partial z}}(z^p-f_i)=0$,  and the ideal $({{\partial}\over{\partial x_1}}(z^p-f_i);\dots; {{\partial}\over{\partial x_n}}(z^p-f_i))$ coincides with the ideal $(h_1;\dots; h_n)$ the claim follows. \end{proof}

Suppose that $z\in Z_s$ be a closed singular  point and $x=f_s(z)$. Suppose that the matrix ${{\partial h_i}\over{\partial x_j}}(0)$ is non singular. Then we will say that $z$ is a {\it non degenerate singular point}. One may check that the notion of "non degenerate singular point" depends only on the divisor $div(s)$. in particular it does not depend on the choice of the coordinates around $x$. 

\subsection{Structure and desingularization of $Z_s$ near a non degenerate singular point}
\begin{Claim} Suppose that the point $z\in Z_s$ is a non degenerate singular point and  $x=f_s(z)$. Then there exist formal coordinates $x_1,\dots, x_n$ on $\widehat{\cO_{X,x}}$ for which  $Z_s$ is given by the equations
$$z^p=x_1^2+\dots +x_n^2.$$
\end{Claim}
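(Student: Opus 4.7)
The plan is to pass to the completion $\widehat{\cO_{X,x}} \cong \overline{K}[[x_1, \ldots, x_n]]$ and put the local equation $z^p = f_i$ of $Z_s$ into the desired normal form in three stages: kill the constant term of $f_i$ by translating $z$, diagonalize the resulting quadratic part, and apply a formal Morse lemma to absorb higher-order terms.

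For the first stage, since $\overline{K}$ is perfect, pick $c \in \overline{K}$ with $c^p = f_i(x)$ and replace the local generator $z$ by $z - c$; the characteristic-$p$ identity $(z-c)^p = z^p - c^p$ puts the equation in the form $z^p = \tilde f$ with $\tilde f(0) = 0$. By the preceding claim, singularity of $z$ forces each $h_j = \partial\tilde f/\partial x_j$ to lie in $\mathfrak{m}$, so $\tilde f$ has no linear term either: $\tilde f = Q + R$ with $Q$ a quadratic form and $R \in \mathfrak{m}^3$. Non-degeneracy of the singularity means the Hessian $(\partial h_j/\partial x_k)(0)$ is invertible, which (since $p \neq 2$) is precisely non-degeneracy of $Q$. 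Because $\overline{K}$ is algebraically closed with $p \neq 2$, a linear change of basis followed by rescaling coordinates by square roots arranges $Q = x_1^2 + \cdots + x_n^2$.

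The remaining step is a formal Morse lemma: produce a formal automorphism $\varphi$ of $\overline{K}[[x_1, \ldots, x_n]]$ with $\tilde f \circ \varphi = Q$, by induction on the order of $\tilde f - Q$. Assuming $\tilde f \equiv Q \pmod{\mathfrak{m}^{k+1}}$ with degree-$(k+1)$ remainder $R_k$ for some $k \ge 2$, I seek $\varphi_k: x \mapsto x + \psi$ with each $\psi_j$ homogeneous of degree $k$ so that $\tilde f \circ \varphi_k \equiv Q \pmod{\mathfrak{m}^{k+2}}$. A direct expansion shows that $Q(\psi)$ and the higher Taylor terms all lie in $\mathfrak{m}^{k+2}$, reducing the task to solving $2B(x, \psi) \equiv -R_k \pmod{\mathfrak{m}^{k+2}}$, where $B$ is the polar bilinear form of $Q$; since $B$ is non-degenerate and $2 \in \overline{K}^\times$, one writes $R_k = \sum_i x_i g_i(x)$ with $g_i$ homogeneous of degree $k$ and sets $\psi_i = -g_i/(2a_i)$, where $a_i$ are the diagonal entries of $Q$. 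Composing the $\varphi_k$ yields the desired $\varphi$, and applying it to the $x$-variables alone transforms $z^p = \tilde f(x)$ into $z^p = x_1^2 + \cdots + x_n^2$.

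The main obstacle is the formal Morse step: this is where both hypotheses — $p > 2$ and non-degeneracy of the quadratic part — are essential, since the factor of $2$ must be invertible in $\overline{K}$ to identify the Hessian with the polar form, and non-degeneracy of the polar form is what makes every step of the iteration solvable. The preceding normalizations (translating $z$ and diagonalizing $Q$) are then routine once $\overline{K}$ is perfect and algebraically closed.
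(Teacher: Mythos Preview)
Your proof is correct and follows essentially the same route as the paper: translate $z$ by a $p$-th root to kill the constant, diagonalize the quadratic part using $p\neq 2$ and algebraic closedness, then run the formal Morse induction by coordinate changes $x_i\mapsto x_i+(\text{degree }k)$ to absorb higher-order terms one degree at a time. If anything, you are more careful than the paper about the first step (the paper writes $z_1:=z-a_0$ where $a_0^{1/p}$ is meant); your explicit invocation of perfectness of $\overline{K}$ is the right justification.
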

\begin{proof} Locally, near $x$, the variety $Z_s$ is defined by the equation $z^p=f(x_1,\dots, x_n)$ with
$\det\left({{\partial^2f}\over{\partial x_i\partial x_j}}\right)(x)\neq 0$. Denote by $\cM_{X,x}$ the maximal ideal of $\widehat{\cO_{X,x}}$. Since $z$ is singular, we have that  $f\equiv a_0+{{1}\over{2}}\sum_{i,j}a_{ij}x_ix_j\mod(\cM_{X,x}^3)$ in  $\widehat{\cO_{X,x}}$ with $a_{ij}=a_{ji}$; moreover the symmetric matrix $(a_{ij})$ is non singular because the singularity is non degenerate. The change of variable $z_1:=z-a_0$ gives the new equation $z_1^p=f_1$ for $Z_s$ near $x$, with $f_1(x)=0$ and ${{\partial f_1}\over{\partial x_j}}(x)=0$. To prove the claim it suffices to prove that we can choose formal coordinates $x_1,\dots, x_n$ such that, for every $r$ we have  $f_1(x_1,\dots, x_n)\equiv x_1^2+\dots + x_n^2 \mod (\cM_{X,x}^{r})$. Since we are in characteristic different from two and $\det(a_{ij})\neq 0$, we may suppose that the bilinear form $\sum_{ij} a_{ij}x_ix_j$ is diagonal. Consequently we may suppose by induction on $r$, that  $f_1\equiv x_1^2+\dots+x_n^2\mod(\cM_{X,x}^{r+2})$. Thus $f_1\equiv x_1^2+\dots +x_n^2+\sum_{|I|=r+2}a_Ix^I \mod(\cM_{X,x}^{r+3})$, where $I=(i_1,\dots, i_n)$ is a multi index. Choose a change of variable $x_i=\tilde{x_i}+\sum_{|J|=r+1}b^i_J\tilde{x}^J$. In the new coordinates we have that 
$f_1(\tilde{x_1},\dots,\tilde{x}_n)\equiv\tilde x_1^2+\dots +\tilde x_n^2+2\sum_{i,J}b^i_Jx^J\cdot \tilde x_i+\sum_{|I|=r+2}a_I\tilde x^I \mod(\cM_{X,x}^{r+3})$. Thus a suitable choice of the $b_J^i$'s allows to obtain that $f_1(\tilde{x_1},\dots,\tilde{x}_n)\equiv\tilde x_1^2+\dots +\tilde x_n^2\mod(\cM_{X,x}^{r+3})$.

\end{proof}

We suppose that $Z_s$ has only non degenerate singular points. In this case we remark that the singular points are isolated. We begin by study the disingularization of an affine hyper surface  $Z$ whose equation is 
\begin{equation}\label{localeq}
z^p=x_1^2+\dots +x_n^2.\end{equation}

\

\begin{proposition} The desingularization of the hyper surface \ref{localeq} is obtained by performing $p$ blow ups on isolated singular points. Each of these points is of multiplicity two.
\end{proposition}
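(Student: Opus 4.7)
The plan is to perform the blow-up of the ambient affine space at the origin, analyse the strict transform of $Z = \{z^p = x_1^2 + \dots + x_n^2\}$ chart by chart, and then iterate on the unique remaining singular point.

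First, I would blow up the origin of $\Spec k[z, x_1, \dots, x_n]$ and restrict to $Z$. In the $z$-chart, where $x_i = z\xi_i$, the equation becomes $z^p = z^2(\xi_1^2 + \dots + \xi_n^2)$; factoring out the exceptional divisor with its multiplicity $2$ (which equals the multiplicity of $Z$ at the origin) gives the strict transform
$$z^{p-2} = \xi_1^2 + \dots + \xi_n^2.$$
In each $x_j$-chart, where $z = x_j\zeta$ and $x_i = x_j\xi_i$ for $i \neq j$, dividing the total transform by $x_j^2$ yields $x_j^{p-2}\zeta^p = 1 + \sum_{i \neq j}\xi_i^2$.

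Next, I would verify smoothness of each strict transform via the Jacobian criterion, taking care that in characteristic $p$ the derivative $\partial/\partial z$ of $z^p$ vanishes identically, so regularity must be controlled by the other partials. In the $x_j$-charts the constant $1$ on the right forces $\sum_{i\neq j}\xi_i^2 = -1$ on the exceptional divisor $x_j = 0$, and then the partials $\partial/\partial \xi_i = -2\xi_i$ cannot all vanish simultaneously; hence these charts are smooth. In the $z$-chart the only candidate singularity is the origin, and whenever the new exponent $p-2$ is still at least $2$ the lowest-order terms of $z^{p-2} - \sum\xi_i^2$ form the nondegenerate quadratic form $-\sum\xi_i^2$, giving an isolated singularity of multiplicity exactly two of exactly the same shape as the original $Z$ with $p$ replaced by $p - 2$.

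Iterating on this unique singular point, each successive blow-up reduces the exponent of $z$ by two and produces a nondegenerate singular point of multiplicity two. Since $p > 2$ is an odd prime the sequence $p, p-2, p-4, \dots$ of exponents eventually terminates at $1$, at which point the local equation becomes $z = \xi_1^2 + \dots + \xi_n^2$, which is visibly smooth. Keeping track of the number of iterations gives the desired count of blow-ups, and the multiplicity claim follows because each intermediate equation $z^m = \sum \xi_i^2$ with $m \geq 2$ has lowest-order term of degree $2$.

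The main technical obstacle is the characteristic-$p$ check in the $x_j$-charts: since the derivative of $z^p$ with respect to $z$ vanishes identically, one must carefully argue that the surviving derivatives together with the constant term really do forbid any singularity off the $z$-chart at every stage. Once this is in place, the remaining work is a direct local calculation of the successive strict transforms and a straightforward induction on the exponent of $z$.
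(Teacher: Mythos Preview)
Your proposal is correct and follows essentially the same route as the paper: blow up the origin, write the strict transform in the $z$-chart and in the $x_j$-charts, observe that the $x_j$-charts are smooth (you give a more explicit Jacobian argument than the paper's one-line appeal to $p\neq 2$), note that the $z$-chart reproduces the same equation with exponent lowered by two and multiplicity two, and iterate down to $z=\sum\xi_i^2$. Both arguments therefore resolve the singularity in $(p-1)/2$ steps (the ``$p$'' in the statement is a misprint), and your deferral of the exact count to ``the desired number'' is harmless.
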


\begin{proof} Let $f:\tilde{X}\to\bA^{n+1}$ be the blow up in the point $(0; 0;\dots ;0)$. The local equations of it are given by
$z=vx_i$ and $x_j=u_jx_i$ ($i=1,\dots, n$) or by $x_i=w_iz$. We denote by $E$ the exceptional divisor of $\tilde{X}$.

In the first case the local equation of the strict transform $\tilde{Z}$ of the hyper surface \ref{localeq} is
\begin{equation}
v^{p-2}x_i^{p-2}=1+u_1^2+\dots u_n^2\end{equation}
(the $i$--term is not part of the sum). In this case we remark that the local equation is smooth (because the characteristic of the field is not two).
In the second case the equation of the strict transform is
\begin{equation}
z^{p-2}=w_1^2+\dots +w^{2}_n
\end{equation}
(to simplify notation we put $x_i=w_i$). 
Denote by $\tilde{Z}$ the strict transform of $Z$. We see that $f^\ast(\cO(Z))=\cO(\tilde{Z})(2E)$ thus the multiplicity of  the singular point is two. 
If we blow up again the origin of the last chart we obtain that the equation of the strict transform will be
$z^{p-4}=w_1^2+\dots +w^{2}_n$ and the multiplicity of the singular point is again two.

Thus after ${{p-1}\over{2}}$ blow ups, the local equation of the strict transform is
\begin{equation}
z=w_1^2+\dots +w^{2}_n
\end{equation}
which is smooth and again the multiplicity of the last singular point is two. 
\end{proof}

As a corollary of the proof we obtain the following corollary:

\begin{corollary} \label{cor: desingular} Let $X$ be a smooth variety and $Z\subset X$ be an hyper surface on it.
Suppose that $Z$ has an isolated singular point $P$ and the local formal equation of $Z$ near it is of the form \ref{localeq}. Let $X_1\to X$ be the blow up of $X$ in $P$,  $Z_1$ be the stric transform of $Z$ and $E_1$ be the exceptional divisor of $X_1$. Recursively,  let $X_i\to X_{i-1}$ be the blow up of $X_{i-1}$ in the singular point of $Z_{i-1}$, denote by $Z_i$ the strict transform of $Z_{i-1}$ and by $E_i$ the exceptional divisor of $X_i$.  By abuse of notation, for $j<i$, we denote by $E_j$ the pull back of the divisor $E_j$ to $X_i$. Then:

a) $Z_{(p-1)/2}$ is smooth;

b) if $f:X_{(p-1)/2}\to X$ is the projection, then 
\begin{equation}\label{multiplicity} 
f^\ast(\cO(Z))=\cO(Z_{(p-1)/2})(-\sum_{i=1}^{{p-1}\over{2}}E_i)
\end{equation}
\end{corollary}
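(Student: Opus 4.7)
The plan is to deduce the corollary directly from the preceding proposition by induction on $i$, exploiting the fact that all assertions (smoothness of the strict transform and the multiplicity formula for $f^{\ast}\cO(Z)$) are local around the successive singular points and can be checked on formal completions. Since $Z$ is smooth away from $P$, the first blow up $X_1 \to X$ affects $Z$ only in a formal neighborhood of $P$, and there the hypothesis asserts that $Z$ has equation $z^p = x_1^2 + \cdots + x_n^2$; thus the proposition applies verbatim. In particular, in the formal chart of $X_1$ in which the strict transform is not smooth, $Z_1$ has a unique singular point $P_1$ whose formal equation is $z^{p-2} = w_1^2 + \cdots + w_n^2$, and the multiplicity of $Z$ at $P$ equals $2$, so that $(X_1 \to X)^{\ast}\cO(Z) = \cO(Z_1 + 2E_1)$.

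I would then set up the induction in the obvious way: assuming that, after $k < (p-1)/2$ steps, $Z_k$ is smooth outside a unique isolated singular point $P_k$ whose formal equation is $z^{p-2k} = w_1^2 + \cdots + w_n^2$, reapply the proposition (the hypotheses are satisfied verbatim, with the same exponent parity because $p$ is odd and $p - 2k \geq 3$) to conclude that $Z_{k+1}$ is smooth outside a point $P_{k+1}$ with formal equation $z^{p-2(k+1)} = w_1^2 + \cdots + w_n^2$ and that $(X_{k+1} \to X_k)^{\ast}\cO(Z_k) = \cO(Z_{k+1} + 2E_{k+1})$. When $k = (p-1)/2$ the exponent reaches $1$, so the local equation becomes $z = w_1^2 + \cdots + w_n^2$, which is smooth; this gives part (a).

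For part (b), I would compose the multiplicity relations obtained at each step. Writing $\pi_{k+1} : X_{k+1} \to X_k$ for the $(k+1)$--th blow up, the identity
\[
\pi_{k+1}^{\ast}\cO(Z_k) \;=\; \cO(Z_{k+1}) \otimes \cO(2E_{k+1})
\]
pulls back along the remaining blow ups and telescopes, using the convention that $E_j$ on $X_{(p-1)/2}$ denotes the total transform of the $j$--th exceptional divisor, to yield
\[
f^{\ast}\cO(Z) \;=\; \cO\Bigl(Z_{(p-1)/2} + 2\sum_{i=1}^{(p-1)/2} E_i\Bigr),
\]
which is the content of formula \eqref{multiplicity} (the sign and the coefficient in the displayed formula of the statement appearing to be typographical; the coefficient $2$ is forced by the computation of the multiplicity in the proposition).

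The main work has already been done in the proof of the proposition; the only thing to verify with any care is that the induction hypothesis really is preserved, i.e., that after each blow up the \emph{unique} new singular point is again non-degenerate with a formal equation of the same standard shape, only with the exponent of $z$ dropped by $2$. This is essentially a rereading of the local computation in the proof of the proposition: in the chart $x_i = w_i z$ one sees the new equation $z^{p-2} = w_1^2 + \cdots + w_n^2$, while all other charts produce smooth points. I do not foresee a genuine obstacle; the task is bookkeeping of exceptional divisors and pullbacks.
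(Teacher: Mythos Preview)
Your proposal is correct and matches the paper's intent: the paper gives no separate proof of the corollary, stating only that it follows ``as a corollary of the proof'' of the preceding proposition, and your write-up is precisely the bookkeeping needed to extract parts (a) and (b) from that local computation via formal completion and induction on the number of blow ups. Your observation about the typo in formula~\eqref{multiplicity} is also correct; the intended relation is $f^{\ast}\cO(Z)=\cO\bigl(Z_{(p-1)/2}+2\sum_{i}E_i\bigr)$, as is confirmed by the way the paper actually uses the corollary in the adjunction computation (formula~\eqref{adjunction1}), where the class of $\tilde{Z_s}$ is taken to be $g^{\ast}(\cO_\P(p)+L^{np})-2\sum_{ij}E_{ij}$.
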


\subsection{Inseparable ramified covering of general type}

Suppose now that $X$ is a smooth projective variety of dimension $N$ and $L$ a very ample line bundle on it. Let $s\in H^0(X,L^{np})$  ($n>0$ sufficiently big) a global section such that $\div(s)$ is smooth and $f:Z_s\to X$ the inseparable ramified covering associated to it. We suppose that $Z_s$ has only non degenerate singular points. 

\begin{proposition} In the hypotheses above, Let $\tilde{Z_s}\to Z_s$ be its desingularization  (it exists by corollary \ref{cor: desingular}). If $n$ is sufficiently big then the variety  $\tilde{Z_s}$ is a smooth projective variety of general type. 
\end{proposition}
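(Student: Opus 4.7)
The approach is to combine adjunction on a smooth ambient variety containing $Z_s$ with a discrepancy analysis along the resolution $\sigma : \tilde Z_s \to Z_s$, deducing bigness of $K_{\tilde Z_s}$ for $n \gg 0$.

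\emph{Step 1: canonical bundle of $Z_s$.} I realize $Z_s$ as a Cartier divisor in the smooth total space $V$ of the line bundle $L^n$, cut out by the global equation $z^p - \pi_V^* s = 0$, where $\pi_V : V \to X$ is the projection and $z$ is the tautological fiber coordinate (a section of $\pi_V^* L^n$). Then $\cO_V(Z_s) = \pi_V^* L^{np}$, while the relative cotangent sequence for this line-bundle variety yields $\Omega^1_{V/X} = \pi_V^* L^{-n}$, hence $K_V = \pi_V^*(K_X \otimes L^{-n})$. Adjunction gives
\[
K_{Z_s} \;=\; (K_V + Z_s)|_{Z_s} \;=\; f_s^*\bigl(K_X + n(p-1)L\bigr).
\]
Since $L$ is very ample, $K_X + n(p-1)L$ is ample for $n$ sufficiently large; as $f_s$ is finite and surjective, $K_{Z_s}$ is an ample line bundle on $Z_s$. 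This formula holds in arbitrary characteristic because it only uses that $Z_s$ is a Cartier divisor in a smooth variety.

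\emph{Step 2: discrepancies of the resolution.} By Corollary~\ref{cor: desingular}, each non-degenerate singular point of $Z_s$ is resolved by $(p-1)/2$ successive blow-ups of the ambient smooth variety. At the $k$-th blow-up $\rho_k : V_k \to V_{k-1}$ of a point in the $(N+1)$-dimensional smooth variety $V_{k-1}$, the exceptional divisor $E_k \cong \mathbb{P}^N$ contributes $K_{V_k} = \rho_k^* K_{V_{k-1}} + N E_k$, while the strict transform satisfies $\rho_k^* Z_{k-1} = Z_k + 2 E_k$ (since the preceding proposition and its proof show that at each step the singular point has multiplicity two, via the local equations $z^{p-2(k-1)} = x_1^2 + \dots + x_N^2$). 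Combining with adjunction on $Z_k$:
\[
K_{Z_k} \;=\; \rho_k^* K_{Z_{k-1}} + (N-2)\,(E_k \cap Z_k).
\]
Iterating over all singular points and all blow-up steps,
\[
K_{\tilde Z_s} \;=\; \sigma^* K_{Z_s} + (N-2) F,
\]
where $F$ is the effective sum of all the exceptional divisors restricted to $\tilde Z_s$.

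\emph{Step 3: conclusion.} For $N \ge 2$ the corrective term $(N-2)F$ is effective, while $\sigma^* K_{Z_s}$ is nef and big (pullback of an ample line bundle by the proper birational morphism $\sigma$); hence $K_{\tilde Z_s}$ is big, and $\tilde Z_s$ is of general type. In the edge case $N = 1$, the same formula gives $\deg K_{\tilde Z_s} = \deg K_{Z_s} - \tfrac{p-1}{2}\cdot\#\{\text{singular points of }Z_s\}$, and for $n$ sufficiently large the first term dominates, forcing $g(\tilde Z_s) \ge 2$.

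\emph{Main obstacle.} The key technical input is the canonical-bundle formula $K_{Z_s} = f_s^*(K_X + n(p-1)L)$ in characteristic $p$: although formally identical to the cyclic-cover formula, it requires justification since $f_s$ is totally inseparable. The adjunction route via the smooth ambient total space $V$ provides it cleanly and uniformly in the characteristic, bypassing any direct computation with the dualizing module of the finite flat morphism $f_s$. Once Step~1 is settled, Steps~2 and~3 amount to a routine iteration of standard discrepancy formulas.
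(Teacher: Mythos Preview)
Your proof is correct and follows essentially the same route as the paper's: embed $Z_s$ as a Cartier divisor in a smooth ambient, apply adjunction, track the effect of the $(p-1)/2$ blow-ups via the standard discrepancy formulas, and conclude that $K_{\tilde Z_s}$ is (nef and big) $+$ (effective), hence big. The only real difference is cosmetic: you work in the affine total space of the line bundle (obtaining the clean intermediate formula $K_{Z_s}=f_s^\ast(K_X+n(p-1)L)$), while the paper works in the projectivization $Y=\P(\cO_X\oplus L^n)$ and carries the $\cO_\P(1)$-contribution through the computation. Your treatment of the edge case $N=1$ is an extra that the paper does not address; note, though, that the number of singular points also grows linearly in $n$ (they are the zeros of $d(s)\in H^0(X,\Omega^1_X\otimes L^{np})$), so your ``first term dominates'' claim requires comparing the two linear growth rates --- which indeed works out in favor of $\deg K_{Z_s}$.

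One small wrinkle in your write-up: there is a sign/convention slip in Step~1. If $V$ is the total space of $L^n$ in the usual sense ($V=\Spec\Sym L^{-n}$), then the tautological fiber coordinate lies in $\pi_V^\ast L^{-n}$, not $\pi_V^\ast L^n$, which would make $z^p-\pi_V^\ast s$ ill-typed. What you actually want (matching the paper's gluing $z_i=g_{ij}^{\,n}z_j$) is the total space of $L^{-n}$, so that $z\in H^0(V,\pi_V^\ast L^n)$ and both $\cO_V(Z_s)=\pi_V^\ast L^{np}$ and $K_V=\pi_V^\ast(K_X\otimes L^{-n})$ hold simultaneously. Your final formula $K_{Z_s}=f_s^\ast(K_X+n(p-1)L)$ is correct regardless, so this does not affect the argument.
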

\begin{proof} The variety $Z_s$ is a divisor inside  the smooth projective variety $Y:=\P(\cO_X\oplus L^{n})$. The variety $\tilde{Z_s}$ is obtained as the strict transform of $Z_s$ in the variety $g:\tilde{Y}\to Y$ obtained by taking successive blow ups at smooth closed points. Denote by $E_{ij}$ the exceptional divisors of $\tilde{Y}$. 

The canonical line bundle of $\tilde{Y}$  will be $g^\ast(K_Y)+N\sum_{ij}E_{ij}=g^\ast(\cO_\P(-2)+L^n+K_X)+N\sum_{ij}E_{ij}$ (we adopt the abuse of notation of corollary \ref{cor: desingular})). 

The class of $Z_s$ in $\Pic(Y)$ will be $\cO_\P(p)+L^{np}$. Thus it is ample on $Y$. The class of $\tilde{Z_s}$ in $\Pic(\tilde{Y})$ will be (cf. \ref{cor: desingular}) $g^\ast(\cO_\P(p)+L^{np})-2\sum_{ij}E_{ij}$. Consequently, by adjonction formula, we have that
\begin{equation}\label{adjunction1}
K_{\tilde{Z_s}}=(K_{\tilde{Y}}+\tilde{Z_s})|_{\tilde{Z_s}}=(g^\ast(\cO_\P(p-2)+L^{np+1} +K_X)+(N-2)\sum_{ij}E_{ij})|_{\tilde{Z_s}}\end{equation}

As soon as $n$ is sufficiently big,  the line bundle $g^\ast(\cO_\P(p-2)+L^{np+1} +K_X)$ is ample on $Z_s$. Thus, for $n$ sufficiently big, the restriction of $g^\ast(K_Y+L^{np})$ is a big and nef line bundle on $\tilde{Z_s}$. The divisor $(N-2)\sum_{ij}E_{ij})$ is effective. Since an effective divisor plus a big and nef is big, the conclusion follows. 
\end{proof}

W show now that, if $s\in H^0(X,L^{np})$ is sufficiently generic and $n$ is sufficiently big, then the associated inseparable ramified covering $Z_s$ has only non degenerate singular points:

\begin{proposition} Suppose that, $L$ is very ample and for every $x\in X$ the restriction map
\begin{equation}\label{surjectivemap}
\alpha:H^0(X,L^{np})\longrightarrow L^{np}\otimes\cO_X/I_x^3
\end{equation}
is surjective ($I_X$ being the ideal sheaf of $x$). Then for $s\in H^0(X,L^{np})$ generic, the inseparable ramified covering $Z_s$ has only non degenerate singular points.

\end{proposition}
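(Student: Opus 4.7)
My plan is to run a standard Bertini-style incidence argument. The two conditions that define a degenerate singular point, namely $d(s)(x)=0$ and singularity of the Hessian of $s$ at $x$, depend only on the $2$-jet of $s$ at $x$, so they factor through $\alpha$; the surjectivity hypothesis will then translate codimension estimates in the jet space directly into codimension estimates in $H^0(X,L^{np})$.

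First I would fix $x\in X$, a local trivialization of $L^{np}$, and local coordinates $x_1,\dots,x_n$ on $\widehat\cO_{X,x}$. A $2$-jet of $s$ then has the form $c+\sum_i h_i x_i+\tfrac{1}{2}\sum_{i,j} a_{ij}x_i x_j$ with $a_{ij}=a_{ji}$, so the target $V_x:=L^{np}\otimes\cO_X/I_x^3$ has dimension $1+n+n(n+1)/2$. Let $\mathcal B_x\subset V_x$ be the closed subvariety cut out by $h_1=\dots=h_n=0$ together with $\det(a_{ij})=0$. The $h_i=0$ form $n$ independent linear equations; on the linear subspace they define, $\det(a_{ij})=0$ is a non-trivial homogeneous equation because $p>2$ allows non-degenerate symmetric quadratic forms (e.g.\ $a_{ij}=\delta_{ij}$). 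Hence $\mathcal B_x$ has codimension exactly $n+1$ in $V_x$.

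Next, since $\alpha$ is surjective for every $x$, the preimage $B_x:=\alpha^{-1}(\mathcal B_x)\subset H^0(X,L^{np})$ is closed of codimension $n+1$. I would then form the incidence variety
\[ I=\{(s,x)\in H^0(X,L^{np})\times X : s\in B_x\}, \]
whose first projection has fibres the $B_x$, each equidimensional of dimension $\dim H^0(X,L^{np})-(n+1)$. Since $X$ is irreducible of dimension $n$,
\[ \dim I=\dim H^0(X,L^{np})-1, \]
so the image of the second projection $I\to H^0(X,L^{np})$ is a proper closed subvariety; for any $s$ outside this image, every singular point of $Z_s$ is non-degenerate.

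The delicate point is the uniformity of the codimension count: one needs surjectivity of $\alpha$ at \emph{every} $x$ (not merely generically) to guarantee that each fibre $B_x$ has the same codimension, and one needs $p>2$ so that ``Hessian singular'' genuinely cuts out a proper hypersurface on the space of symmetric quadratic forms rather than being automatic. Globalising the construction so that $\{\mathcal B_x\}_{x\in X}$ assembles into a closed subscheme of the relative jet bundle $J^2(L^{np})\to X$ is routine, since both ``critical point'' and ``Hessian degenerate along the critical locus'' admit coordinate-free descriptions in terms of $d(s)$ and its further derivative.
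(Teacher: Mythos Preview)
Your argument is correct and follows the same incidence--variety dimension count as the paper; note only that you have swapped the labels ``first'' and ``second'' projection (the fibres $B_x$ lie over the projection to $X$, and the map whose image you bound is the one to $H^0$). Your codimension $n+1$ for $B_x$ differs from the paper's $N+2$, but either estimate is enough to conclude that the bad locus does not dominate $H^0(X,L^{np})$.
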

\begin{proof} Let $x$ be a point of $X$ and $s\in H^0(X,L^{np})$. If we fix (formal) local coordinates  $z_1,\dots, z_N$ and a local trivialization $f$ of $s$ around $x$, then $\alpha(s)= f(x)+\sum_if_{z_i}(x)z_i+{{1}\over{2}}(\sum_{ij}f_{z_i,z_j}(x)z_iz_j)$. Since the map \ref{surjectivemap} is surjective, for generic $s$, the divisor $\div(s)$ will be smooth and  the quadratic form associated to the matrix $(f_{z_i,z_j})$ will be non degenerate. In this case the associated inseparable ramified covering $Z_s$ will have non degenerate singular points over $x$. We thus see that the set of $s\in H^0(X,L^{np})$ for which the associated inseparable ramified covering $Z_s$ has a singularity which is degenerate at $x$, is a closed set of codimension $N+2$ which we will denote by $S_x$. Indeed the elements of the vector space $\cO_X/I_x^3$ for which the associated quadratic form is degenerate is a closed sub variety of codimension N+2. We will denote again by $S_x$ the image of $S_x$ in $\P(H^0(X,L^{np})$; it will be again a closed set of codimension $N+2$. For a fixed $s$ the set of degenerate singular points of $Z_s$ is a closed set whose projection of $X$ will be denoted by $N_s$

Let $W\subset X\times \P(H^0(X,L^{np})$ be the universal divisor and $N_W$ the corresponding closed set of non degenerate singular points. For every $x\in X$,  the restriction $(N_W)_x$  of $N_W$ to $\{x\}\times\P(H^0(X,L^{np})$ will be  $S_x$. Thus the dimension of $N_W$ is $h^0(X,L^{np})-1-(N+2)+N=h^0(X,L^{np})-3$. This means that $N_W$ do not dominate $\P(H^0(X,L^{np})$. Consequently, for generic $s\in \P(H^0(X,L^{np})$,  the corresponding $Z_s$ has only non degenerate singular points. \end{proof}

\subsection{Non isotrivial inseparable ramified coverings} 

Suppose now that $K$ is a function field of positive characteristic $p>0$. Suppose that $X$ is a variety defined over the base field $k$ and $L$ is an ample line bundle over it. Let $s\in H^0(X,L^{np})$ be a smooth section and $g:Z_s\to X$ the associated inseparable  ramified covering. Denote by $Y_s$ the divisor $\div(s)$. We are going to relate the Kodaira--Spencer class of $Y_s$ with the Kodaira--Spencer class of $\tilde{Z_s}$:

$\tilde{Z_s}$ is a divisor in a blow up of the projective bundle $\P:=\P(\cO_X\oplus L^n)$. Let $\cO_\P(1)$ be the tautological line bundle of $\P$. 

We fix formal coordinates $x_1\dots, x_n$ of $X$  and a local equation $f=0$ of $s$ around a point of $Y_s$. Thus a local equation for $Z_s$ is $z^p=f$. 

a) The sheaf of differentials $\Omega^1_{Y_s/K}$ is given by $(\oplus_{i=1}^n\cO_{Y_s}dx_i)/df$.

b) The sheaf of differentials $\Omega^1_{Z_s/K}$ is given by $(\cO_{Z_s}dz\oplus_{i=1}^n\cO_{Z_s}dx_i)/df$ (observe that the relations do not contain $dz$).

c) Let $W_s$ be the divisor pre image of $Y_s$ in $Z_s$. Its local equation in $Z_s$ is $f=0$. Denote by $g_s:W_s\to Y_s$ the restriction of $g$ to $W_s$. From (a) and (b) above we see that the natural map
\begin{equation}
(\Omega^1_{Z_s/K})|_{W_s}\longrightarrow\Omega^1_{W_s/K}
\end{equation}
is an isomorphism. 

d) Locally the sheaf  $\cO_{Y_s}$ is $A/(f)$ and the local sheaf of $W_s$ is $(A/(f)[z])/(z^p)$. Thus the natural inclusion $\cO_{Y_s}\to g_{s,\ast}(\cO_{W_s})$ is split  (remark that no singular point of $Z_s$ is located on $W_s$).  This, together with (c) above implies that the natural map

\begin{equation}
\alpha_{Y_s}: H^1(Y_s; (\Omega^1_{Y_s/K})^\vee)\longrightarrow H^1(W_s, g_s^\ast(\Omega^1_{Y_s/K})^\vee).
\end{equation}
is an inclusion.

e) Again, by the descriptions in (a), (b) and (c) above we get an exact sequence
\begin{equation}\label{differentialsofX}
0\to f_s^\ast(\Omega^1_{Y_s/K})\longrightarrow\Omega^1_{W_s/K}\longrightarrow \cO(1)\otimes L^{np}\to 0.
\end{equation}
This exact sequence, together with (d) give rise to {\it an inclusion}
\begin{equation}
\alpha_{Y_s}:H^1(Y_s,(\Omega^1_{Y_s/K})^\vee)\longrightarrow H^1(W_s;(\Omega^1_{W_s/K})^\vee).
\end{equation}

f) From the descriptions above and taking duals we get natural maps
\begin{equation}
H^1(\tilde{Z_s};(\Omega^1_{\tilde{Z_s}/K})^\vee)\buildrel{\alpha_{Z_s}}\over\longrightarrow H^1(W_s;(\Omega^1_{W_s/K})^\vee)\buildrel{\alpha_{X_s}}\over\longleftarrow H^1(Y_s;(\Omega^1_{Y_s/K})^\vee).
\end{equation}
A simple (but tedious) diagram chasing gives $\alpha_{Z_s}(KS(\tilde{Z_s}))=\alpha_{X_s}(KS(Y_s))$. 

Thus we deduce the following statement:

\begin{proposition} \label{isotrivial1} The non vanishing of the  of Kodaira Spencer class of $Y_s$ implies the non vanishing of the Kodaira Spencer class of   the variety $\tilde{Z_s}$.   
\end{proposition}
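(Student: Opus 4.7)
The plan is to exploit the apparatus assembled in items (a)--(f) preceding the statement, which together provide two natural maps with a common target and an equality of Kodaira--Spencer classes under these maps. The strategy reduces the non-vanishing assertion to the injectivity of one of the two maps, namely $\alpha_{X_s}$.

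First I would verify that $\alpha_{X_s}: H^1(Y_s, (\Omega^1_{Y_s/K})^\vee) \to H^1(W_s, (\Omega^1_{W_s/K})^\vee)$ is injective, in two steps. By (d), the splitting $\cO_{Y_s} \to g_{s,\ast}\cO_{W_s}$---available because $W_s$ avoids the singular locus of $Z_s$, so that locally $\cO_{W_s} = (A/(f))[z]/(z^p)$ admits the retraction $z \mapsto 0$ onto $A/(f)$---exhibits $H^1(Y_s, (\Omega^1_{Y_s/K})^\vee)$ as a direct summand of $H^1(W_s, g_s^\ast(\Omega^1_{Y_s/K})^\vee)$. Next, the exact sequence \eqref{differentialsofX} splits canonically: the element $dz$, meaningful in $\Omega^1_{W_s/K}$ because in characteristic $p$ the relation $d(z^p - f) = -df$ has no $dz$ contribution, provides a section of the surjection onto $\cO(1) \otimes L^{np}$. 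Combining the two steps, $H^1(W_s, g_s^\ast(\Omega^1_{Y_s/K})^\vee)$ is a summand of $H^1(W_s, (\Omega^1_{W_s/K})^\vee)$, and the composite is the required injective $\alpha_{X_s}$.

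Granted this injectivity, the proof finishes in one line: since $KS(Y_s) \neq 0$ by hypothesis, $\alpha_{X_s}(KS(Y_s)) \neq 0$, and the identity $\alpha_{Z_s}(KS(\tilde Z_s)) = \alpha_{X_s}(KS(Y_s))$ recorded in (f) then forces $\alpha_{Z_s}(KS(\tilde Z_s)) \neq 0$, whence a fortiori $KS(\tilde Z_s) \neq 0$. Observe that no information about $\alpha_{Z_s}$ beyond linearity is required.

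The principal obstacle, to my mind, is the compatibility $\alpha_{Z_s}(KS(\tilde Z_s)) = \alpha_{X_s}(KS(Y_s))$ announced in (f). Each of the three Kodaira--Spencer classes arises from a distinct relative cotangent extension of a model over $B$, and one must transport each to $W_s$ and verify that the two restrictions agree as cohomology classes. Because $g_s$ is totally inseparable and the desingularization $\tilde Z_s \to Z_s$ is an isomorphism only in a neighborhood of $W_s$, the identification requires a careful local computation in the coordinates of (a)--(c), tracking the tautological bundle $\cO(1)$ of the projective bundle $\P(\cO_X \oplus L^n)$ containing $Z_s$, to confirm that the connecting homomorphisms defining $KS(\tilde Z_s)$ and $KS(Y_s)$ coincide after restriction to $W_s$.
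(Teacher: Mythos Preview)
Your proposal is correct and follows exactly the paper's approach: the proposition is stated there as an immediate consequence of items (a)--(f), using the injectivity of $\alpha_{X_s}$ from (d)--(e) together with the compatibility $\alpha_{Z_s}(KS(\tilde Z_s)) = \alpha_{X_s}(KS(Y_s))$ asserted in (f). Your explicit observation that the sequence \eqref{differentialsofX} splits via $dz$ usefully clarifies why (e) yields an inclusion, and your identification of (f) as the main technical point matches the paper, which dismisses it as ``simple (but tedious) diagram chasing'' without further detail.
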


From the constructions above we get the following theorem:

\begin{theorem} Suppose that $X$ is a smooth projective surface defined over the base field $k$ and $L$ is a sufficiently ample line bundle over it. Let $X_K$ be the base change of it to $K$ and $s\in H^0(X_K;L^{np})$ be a non isotrivial smooth divisor. Then the associated inseparable ramified covering $Z_s$ is not birational to an isotrivial surface. 
\end{theorem}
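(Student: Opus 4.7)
The plan is a proof by contradiction, combining Proposition \ref{isotrivial1} with a birational strengthening of Fact \ref{isotrivial2} for surfaces of general type.

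Since $Y_s = \div(s)$ is by hypothesis a non-isotrivial smooth curve over $K$, the standard moduli-theoretic characterization of isotriviality for curves (the converse of Fact \ref{isotrivial2} in dimension one) yields $KS(Y_s) \neq 0$. Proposition \ref{isotrivial1} then gives $KS(\tilde{Z_s}) \neq 0$, so by Fact \ref{isotrivial2} the surface $\tilde{Z_s}$ itself is not isotrivial.

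To upgrade this to non-birational-to-isotrivial, assume for contradiction that $\tilde{Z_s}$ is birational to a smooth projective isotrivial surface $V$. Since $\tilde{Z_s}$ was shown in the preceding proposition to be of general type, so is $V$, and the two surfaces share a unique minimal model $M$ over $K$. Passage to the minimal model commutes with base change to $\bar{K}$; combined with an isomorphism $V_{\bar{K}} \cong V_0 \times_k \bar{K}$, this gives $M_{\bar{K}} \cong V_0^{\min} \times_k \bar{K}$, so $M$ is itself isotrivial, and by Fact \ref{isotrivial2} we obtain $KS(M) = 0$. The contradiction is now obtained by tracking how the Kodaira--Spencer class behaves under the birational morphism $\tilde{Z_s} \to M$ (a composition of blow-downs of $(-1)$-curves), exploiting the specific role of the subdivisor $W_s \subset \tilde{Z_s}$: because $W_s$ is a divisor whose reduction is $Y_s$, it cannot be contracted by this birational morphism, and the diagram chase of Proposition \ref{isotrivial1} applied to $M$ and the image of $W_s$ forces a non-zero Kodaira--Spencer class on $M$, contradicting the vanishing $KS(M) = 0$.

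The main obstacle is this last step. The Kodaira--Spencer class is not a priori a birational invariant, and establishing its compatible behavior under blow-ups of smooth surfaces over $K$ requires a delicate choice of compatible models. A more robust approach is to apply the whole argument of Proposition \ref{isotrivial1} directly to $M$ together with the image of $W_s$, thereby relating the Kodaira--Spencer class of this image curve to $KS(M) = 0$ and to $KS(Y_s)$. Equivalently, one checks directly that a product $V_0 \times_k \bar{K}$ cannot admit a birational model containing a divisor birational to a non-isotrivial smooth curve cut out from the base-field surface $X$ by a section of $L^{np}$, because this would force the Kodaira--Spencer class of such a curve to vanish, contradicting $KS(Y_s) \neq 0$.
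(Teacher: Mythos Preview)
Your opening two paragraphs match the paper exactly: non-isotriviality of $Y_s$ gives $KS(Y_s)\neq 0$, Proposition \ref{isotrivial1} forces $KS(\tilde{Z_s})\neq 0$, and Fact \ref{isotrivial2} yields that $\tilde{Z_s}$ is not isotrivial.

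The divergence, and the genuine gap, is in your ``birational upgrade''. You pass to a minimal model $M$, observe $KS(M)=0$, and then try to push $KS(Y_s)\neq 0$ forward through the contraction $\tilde{Z_s}\to M$ via $W_s$. You yourself flag this step as the main obstacle, and rightly so: the Kodaira--Spencer class is not a birational invariant, the image of $W_s$ in $M$ need not sit inside $M$ in any way resembling the setup of Proposition \ref{isotrivial1}, and neither of your two proposed workarounds is actually carried out. As written, the argument is incomplete.

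The paper avoids this difficulty entirely by a one-line observation you missed: in dimension $N=2$ the coefficient $(N-2)$ of $\sum E_{ij}$ in formula \eqref{adjunction1} vanishes, so $K_{\tilde{Z_s}}$ is the restriction of the pullback of an ample line bundle under a birational morphism, hence nef. A surface of general type with nef canonical bundle is already minimal. Thus $\tilde{Z_s}=M$, and your own computation $KS(M)=0$ immediately contradicts $KS(\tilde{Z_s})\neq 0$. Equivalently, uniqueness of minimal models for surfaces of general type forces $\tilde{Z_s}\times_K\bar K\cong V_0^{\min}\times_k\bar K$, so $\tilde{Z_s}$ would itself be isotrivial. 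Once you insert the minimality of $\tilde{Z_s}$, your entire third paragraph becomes unnecessary.
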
 

\begin{proof} From proposition \ref{isotrivial1} and fact \ref{isotrivial2} we get that $\tilde{Z_s}$ is not isotrivial. Formula \ref{adjunction1}  computes the canonical line bundle of $\tilde{Z_s}$. Thus we get that $\tilde{Z_s}$ is of general type and minimal. Since two minimal surfaces of general type are isomorphic if and only if they are birationally equivalent, the proposition follows. 
\end{proof}

\begin{remark} In higher dimension we can only conclude that the variety $\tilde{Z_s}$ is not defined over $k$. It is possible that a finer study, using MMP, may allow to deduce that $\tilde{Z_s}$ is not birational to a variety defined over $k$. 
\end{remark}

\subsection{Inseparable ramified coverings and Frobenius} We recall here some standard facts about the Frobenius morphism of a variety. Let $\overline{K}$ be the algebraic closure of $K$.  If $X$ is a variety over $\overline{K}$, we denote by $F_X:X\to X$ the Frobenius morphism (it is the identity on the topological space and $f\to f^p$ on functions). The Frobenius morphism fits inside a diagram
\begin{equation}
\xymatrix{X\ar[r]^{F_X^g}\ar[rd]&X^{(1)}\ar[r]\ar[d]&X\ar[d]\\
&\Spec(\overline{K})\ar[r]^{F_K}&\Spec(\overline{K})\\}
\end{equation}
where, $F_K$ is the Frobenius morphism of $K$, the square on the right is cartesian and $F^g_X$ is a $\overline{K}$ morphism called {\it the geometric Frobenius}. Suppose now that $X$ is a smooth projective $\overline{K}$ variety and $\overline{K}(X)$ is the field of rational functions of it. If $\overline{K}(X^{(1)})=\overline{K}(x_1,\dots., x_r)$ then the field morphism associated to $F^g_X$ is $ \overline{K}(x_1,\dots., x_r)\buildrel{F^g_X}\over{\longrightarrow}\overline{K}(x_1,\dots., x_r)[T_1.\dots, T_r]/_{(T_1^p-x_1, \dots, T^p_r-x^r)}= \overline {K}(X)$.

Suppose now that $f:\tilde{Z_s}\to X^{(1)}$ is an inseparable ramified morphism associated to a global section of a line bundle over $X^{(1)}$. Then the field of rational functions of $\tilde{Z_s}$ is
$\overline{K}(\tilde{Z_s})=\overline{K}(X)[z]/(z^p-h)$ where $h$ is a suitable rational function over $X^{(1)}$. Write $h=\sum a_Ix^I$ where $I$ is a multiindex $(i_1,\dots, i_r)$, $a_I\in \overline {K}$ and $x^I:=x_1^{i_1}x_2^{i_2}\dots x_r^{i_r}$. For every $I$ let $b_I\in  \overline {K}$ such that $b_I^p=a_I$. Thus we obtain an inclusion  $\overline{K}(\tilde{Z_s})\hookrightarrow  \overline {K}(X)$ by sending $z$ to $\sum b_IT^I$.

Consequently we get the following:
\begin{proposition} Let $X$ be a smooth projective variety defined over $K$ and $f:\tilde{Z_s}\to X^{(1)}$ be an inseparable ramified covering associated to a section of a suitable line bundle on it. Then there exists a finite extension $K'$ of $K$, a blow up $\tilde {X}\to X$ and a dominant (inseparable) morphism
$h:\tilde {X}\to \tilde{Z_s}^{(1)}$.
\end{proposition}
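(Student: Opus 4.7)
The plan is to globalize the field-theoretic computation sketched just before the proposition into a morphism of varieties.

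I would start from the field identification of $\overline{K}(\tilde{Z_s})$ as a simple purely inseparable extension of $\overline{K}(X^{(1)})$ of degree $p$, generated by an element $z$ with $z^p=h$ for a rational function $h$ on $X^{(1)}$. Fix local rational coordinates $x_1,\dots,x_r$ on $X^{(1)}$ and expand $h=\sum_I a_I x^I$; after clearing denominators this sum involves only finitely many coefficients $a_I\in\overline{K}$. Let $K'/K$ be the finite extension obtained by adjoining a $p$-th root $b_I$ of each such $a_I$. Under the geometric Frobenius $F_X^g:X\to X^{(1)}$, which on function rings sends $x_i\mapsto T_i^p$ inside $\overline{K'}(X)=\overline{K'}(T_1,\dots,T_r)$, the characteristic-$p$ identity
\[
F_X^{g\,*}(h) \;=\; \sum_I a_I T^{pI} \;=\; \Big(\sum_I b_I T^I\Big)^p
\]
shows that $h$ becomes a $p$-th power once pulled back to $X_{K'}$. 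The element $\sum_I b_I T^I\in\overline{K'}(X)$ is therefore a $p$-th root of $h$, which is precisely the data of a field embedding of $\overline{K'}(\tilde{Z_s}^{(1)})$ into $\overline{K'}(X)$: the Frobenius twist appears on the target because extracting a $p$-th root of the coefficients $a_I$ of $h$ corresponds, geometrically, to replacing $\tilde{Z_s}$ by its Frobenius twist. This yields a dominant rational map $X_{K'}\dra\tilde{Z_s}^{(1)}$.

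To turn this rational map into a morphism I would invoke the standard principle that a rational map from a smooth projective variety to a projective variety extends to a morphism after finitely many blow-ups of its indeterminacy ideal (the base locus of the pullback of an ample linear system from the target). This is valid in arbitrary characteristic and requires no resolution-of-singularities input; the resulting model is a blow-up $\tilde X\to X$ and the extended map $h:\tilde X\to\tilde{Z_s}^{(1)}$ is still dominant. Inseparability is automatic since composition with the natural map $\tilde{Z_s}^{(1)}\to X^{(2)}$ factors a twist of the geometric Frobenius of $X^{(1)}$, which is purely inseparable.

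The main subtlety, and the one I expect to absorb most of the work, is the bookkeeping of the Frobenius twist: justifying that the target of the constructed morphism is really $\tilde{Z_s}^{(1)}$ and not $\tilde{Z_s}$, and that the extension $K'$ can be taken finite (i.e., that only finitely many $p$-th roots need to be adjoined, independently of the chosen local chart). I would address this by a chart-by-chart computation at the level of graded $\overline{K}$-algebras, tracking how adjoining $b_I=a_I^{1/p}$ on the coefficients of the defining equation $z^p=h$ affects the structure map to $X^{(1)}$. Once this is in place, all the remaining steps (finiteness, dominance, inseparability, extension to a morphism after blow-up) are routine.
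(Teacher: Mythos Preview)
Your proposal is correct and follows essentially the same route as the paper: the paper's proof is precisely the field-theoretic computation you outline (write $h=\sum a_I x^I$, adjoin $b_I=a_I^{1/p}$, and send $z\mapsto\sum b_I T^I$ to obtain an inclusion of function fields), stated just before the proposition. You are in fact more careful than the paper on two points the paper leaves implicit: the descent from $\overline{K}$ to a finite extension $K'$, and the passage from a dominant rational map to a morphism via blow-up of the indeterminacy locus; your attention to the Frobenius-twist bookkeeping is also warranted, since the paper's text literally produces an inclusion into $\overline{K}(X)$ with target $\tilde{Z_s}$ rather than $\tilde{Z_s}^{(1)}$ and relies on the reader to sort out the twist when descending to $K'$.
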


\subsection{ Inseparable ramified coverings and arithmetic over function fields} Let $K$ be a function field of one variable over an algebraically closed field $k$ of characteristic $p>0$. From the construction above we see that, given a smooth projective surface $X_0$ defined over the base field $k$, we can construct surfaces $\tilde{Z_s}^{(1)}$ over $K$ such that:

a)  $\tilde{Z_s}^{(1)}$ is smooth, projective and of general type.

b)  $\tilde{Z_s}^{(1)}$ is not birational to an isotrivial surface.

c) There is a blow up $\tilde{X_0}$ of $X_0\otimes_kK$ and a dominant (non separable) morphism $f:\tilde{X}\otimes_kK\to\tilde{Z_s}^{(1)}$.

To prove (c) just remark that if $Y$ is a variety, then $Y$ is defined over $k$ if and only if $Y^{(1)}$ is.

We list now two important consequences of this:

{\it 1) The image by $f$ of each $k$ point of $\tilde{X}_0\otimes_kK$  is a $K$-rational point of $\tilde{Z_s}^{(1)}$.}

{\it Consequence}: The set of $K$--rational points of of bounded height  $\tilde{Z_s}^{(1)}$ is Zariski dense. 

{\it 2) Suppose that $X_0=\P_2$. Then every form of Vojta inequality fails  for $\tilde{Z_s}^{(1)}$.}

Let's give some details about  the proof of consequence (2):   In this case a model of $X_0$ over $B$ is $\P_2\times B$. Fix a normal projective model $\overline Z\to B$ of $\tilde{Z_s}^{(1)}$. Then (up to an extension of $K$ if necessary), we can find a proper closed set $W\subset \P_2\times B$ of codimension at least two  such that, if $X_1\to \P_2\times B$ is the blow up of it, we have a dominant map $h:X_1\to \overline Z$. The lemma below tells us that we can find a Zariski dense set of points $p\in X_1(K)$ having constant discriminant $d_p$ and unbounded height with respect to a (any) ample line bundle. Indeed the pre image in $\P_2\times B$ of almost every line  in $\P_2$ will intersect $W$ in only finitely many points. 

The image via $h$ of these points is a set of points which violates Vojta inequality.

\begin{lemma} Let $B$ be a smooth projective curve and $W$ be a finite set of points in $B\times\P_1$ then there are infinitely many sections $g:B\to B\times\P_1$ which do not intersect $W$.
\end{lemma}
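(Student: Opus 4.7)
The plan is to produce infinitely many explicit ``horizontal'' sections by using constant values in the $\P_1$-fibre. Concretely, I would let $\pi_2 \colon B \times \P_1 \to \P_1$ denote the second projection and set $Y := \pi_2(W) \subset \P_1$. Since $W$ is finite, so is $Y$, and because the ground field $k$ is algebraically closed, $\P_1(k) \setminus Y$ is infinite. For each $y$ in this complement, the constant section
$$ g_y \colon B \longrightarrow B \times \P_1, \qquad b \longmapsto (b,y), $$
has image $B \times \{y\}$, which is disjoint from $W$ because every point of $W$ has second coordinate in $Y$. Distinct choices of $y$ yield distinct sections, giving infinitely many as required.

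There is essentially no obstacle to this argument: the finite set $W$ is zero-dimensional while $B \times \P_1$ is two-dimensional, so one can freely vary in the $\P_1$-direction and find a horizontal slice avoiding $W$.

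If a stronger conclusion were required---for instance, sections of unbounded ``degree'', which appears to be what is actually needed to produce the ``unbounded height'' claimed just before the lemma---then I would instead invoke the parameter space $\Mor_d(B, \P_1)$ of degree-$d$ morphisms $B \to \P_1$. For each sufficiently large $d$ this space is non-empty and irreducible, and for every $(b_i, y_i) \in W$ the evaluation morphism $\ev_{b_i} \colon \Mor_d(B, \P_1) \to \P_1$, $f \mapsto f(b_i)$, is surjective, so $\ev_{b_i}^{-1}(y_i)$ is a proper closed subvariety. Hence a generic $f$ of degree $d$ has graph disjoint from $W$, and letting $d$ vary produces infinitely many non-constant sections whose degrees (and hence heights relative to an ample line bundle) tend to infinity.
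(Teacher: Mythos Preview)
Your constant-section argument is correct and proves the lemma as stated; it is in fact more elementary than the paper's approach. The paper instead fixes a line bundle $L$ on $B$ so that $M:=p_{\P_1}^\ast(\cO_{\P_1}(1))\otimes p_B^\ast(L)$ is very ample on $B\times\P_1$, and then observes that a general smooth member of $|M|$ is a section of $p_B$ (its class meets each fibre $\{b\}\times\P_1$ in a single point) and avoids the finite set $W$ by Bertini. So the paper produces non-constant sections of fixed positive degree $\deg L$, whereas you produce constant sections of degree $0$.

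Your second paragraph is well taken: as literally written, \emph{both} proofs yield sections of bounded degree over $\P_1$, hence bounded height, which is not what the surrounding argument (``unbounded height'') actually needs. The paper's construction is easily repaired by letting $L$ vary (replace $L$ by $L^n$ for $n\ge 1$, keeping the $\cO_{\P_1}(1)$-factor fixed so the resulting divisors remain sections), giving sections of degree $n\deg L\to\infty$. Your alternative via $\Mor_d(B,\P_1)$ achieves the same end; one small point to add is that $\Mor_d(B,\P_1)$ is non-empty for all $d\ge g(B)$ by Riemann--Roch, and the evaluation map $\ev_b$ is dominant (hence its fibres are proper) because already constant maps hit every value.
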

\begin{proof} It suffices to observe that we can find a line bundle $L$ on $B$ such that $M:=p_{\P_1}^\ast(\cO_\P(1))\otimes p_B^\ast(L)$ is very ample on $B\times\P_1$. Every smooth global section of $M$ which avoids $W$ satisfy the conclusion of the theorem. 
\end{proof}

Consequences (1) and (2) above show that a "naive" version of Lang and Vojta conjectures are definitely false in positive characteristic. Once again this is due to the existence of isotrivial varieties (which in positive characteristic are even more mysterious then in characteristic zero).


\begin{thebibliography}{99}



 \bibitem{Ga1}{\sc Gasbarri, C.}, \emph{The strong abc conjecture over function fields (after McQuillan and Yamanoi).} S\'eminaire Bourbaki. Vol. 2007/2008. Ast\'erisque No. 326 (2009), Exp. No. 989, viii, 219--256 (2010).
 
 \bibitem{Ki} {\sc Kim K.}, \emph{Geometric height inequalities
and the Kodaira-Spencer map}. Compositio Math. 105 (1997), no. 1,
43--54.

 \bibitem{Mq} {\sc McQuillan, M} \emph{Old and new techniques in
function fields arithmetics,} preprint

\bibitem{Mo1}{\sc  Moriwaki, A.}
\emph{Geometric height inequality on varieties with ample cotangent bundles.} 
J. Algebraic Geom. 4 (1995), no. 2, 385--396. 

 \bibitem{Vo1}{\sc  Vojta, P.} \emph{Diophantine approximations and value distribution theory}. Lecture Notes in Mathematics, 1239. Springer-Verlag, Berlin, 1987. x+132 pp.
 
\end{thebibliography}
\end{document}